\newtheorem{theorem}{THEOREM}[section]
\newtheorem{proposition}[theorem]{PROPOSITION}
\newtheorem{corollary}[theorem]{COROLLARY}
\theoremstyle{definition}
\theoremstyle{definitions}
\newtheorem{example}[theorem]{EXAMPLE}
\theoremstyle{notations}
\theoremstyle{remarks}
\newcommand{\al}{\alpha}
\newcommand{\bt}{\beta}
\journal{}
\begin{document}

\begin{frontmatter}



\title{SOME RESULTS IN QUASITOPOLOGICAL HOMOTOPY GROUPS}

\author[label2]{T.~NASRI}
\ead{tnasri72@yahoo.com}
\author[label1]{H.~MIREBRAHIMI}
\ead{h\_mirebrahimi@um.ac.ir}
\author[label1]{H.~TORABI\corref{cor1}}
\ead{h.torabi@um.ac.ir}

\address[label1]{Department of Pure Mathematics, Center of Excellence in Analysis on Algebraic Structures, Ferdowsi University of Mashhad,\\
P.O.Box 1159-91775, Mashhad, Iran.}
\address[label2]{Department of Pure Mathematics, Faculty of Basic Sciences,  University of Bojnord,\\
 Bojnord, Iran.}
\cortext[cor1]{Corresponding author}

\begin{abstract}
{
\small
In this paper we show that the $n$th quasitopological homotopy group of a topological space is isomorphic to $(n-1)$th quasitopological homotopy group of its loop space and by this fact we obtain some results about quasitopological homotopy groups. Finally, using the long exact sequence of a based pair and a fibration in qTop introduced by Brazas in 2013, we obtain some results in this field.}
\end{abstract}

\begin{keyword}
{\small Homotopy group \sep Quasitopological group \sep Fibration.}
\MSC[2010]  55Q05\sep 54H11\sep 14D06.
\end{keyword}

\end{frontmatter}

\section{\bf INTRODUCTION }
Endowed with the quotient topology induced by the natural surjective map $q:\Omega^n(X,x)\rightarrow \pi_n(X,x)$, where $\Omega^n(X,x)$ is the $n$th loop space of $(X,x)$ with the compact-open topology, the familiar homotopy group $\pi_n(X,x)$ becomes a quasitopological group which is called the quasitopological $n$th homotopy group of the pointed space $(X,x)$, denoted by $\pi_n^{qtop}(X,x)$ (see \cite{B,Br,Bra,G1}).

It was claimed by Biss \cite{B} that $\pi_1^{qtop}(X,x)$ is a topological group. However, Calcut and McCarthy \cite{CM} and Fabel \cite{F2} showed that there is a gap in the proof of \cite[Proposition 3.1]{B}. The misstep in the proof is repeated by Ghane et al. \cite{G1} to prove that $\pi_n^{qtop}(X,x)$ is a topological group \cite[Theorem 2.1]{G1} (see also \cite{CM}).

Calcut and McCarthy \cite{CM} showed that $\pi_1^{qtop}(X,x)$ is a homogeneous space and more precisely, Brazas \cite{Br} mentioned that $\pi_1^{qtop}(X,x)$ is a quasitopological group in the sense of \cite{A}.

Calcut and McCarthy \cite{CM} proved that for a path connected and locally path connected space $X$,  $\pi_1^{qtop}(X)$ is a discrete topological group if and only if $X$ is semilocally 1-connected (see also \cite{Br}). Pakdaman et al. \cite{P1} showed that for a locally $(n-1)$-connected space $X$, $\pi_n^{qtop}(X,x)$ is discrete if and only if $X$ is semilocally n-connected at $x$ (see also \cite{G1}). Also, they proved that the quasitopological fundamental group of every small loop space is an indiscrete topological group. We recall that a loop in X at x is called small if it is homotopic to a loop in every neighborhood
U of x. Also the topological space $X$ with non trivial fundamental group is called a small loop space if every loop of $X$ is small.

In this paper, we obtain some results about quasitopological homotopy groups. One of the main results of Section 2 is as follows:

{\textbf{Theorem 2.1}}. Let $(X,x)$ be a pointed  topological space.  Then for all $n\geq 1$ and $1\leq k\leq n-1$,
\[\pi_n^{qtop}(X,x)\cong \pi_{n-k}^{qtop}(\Omega^k(X,x), e_x).\]
where $e_x$ is the constant $k$-loop in $X$ at $x$.\\

 By this fact we can show that some properties of a space can
be transferred to its loop space. Also, we obtain several results in quasitopological homotopy groups. Moreover, we show that for a fibration $p:E\longrightarrow X$ with fiber $F$,   the induced map $f_*: \pi_{n}^{qtop}(B,b_0)\longrightarrow  \pi_{n-1}^{qtop}(F,\tilde{b_0})$ is continuous.

Brazas in his thesis \cite{Br3} exhibited two long exact sequences of based pair $(X,A)$ and fibration $p:E\longrightarrow X$ in qTop. In Section 3, we use these sequences and obtain some results in this filed. For instance, we conclude the following results:

{\textbf{Proposition 3.3}}. If $r:X\longrightarrow A$ is a retraction, then there are isomorphisms in quasitopological groups, for all $n\geq 2$,
\[\pi_n^{qtop}(X)\cong \pi_n^{qtop}(A)\times \pi_n^{qtop}(X,A).\]

{\textbf{Corollary 3.7}}. If $p:\tilde{X}\longrightarrow X$ is a covering projection, then for all $n\geq 2$, $\pi_n^{qtop}(\tilde{X})\cong \pi_n^{qtop}(X)$ and $\pi_1^{qtop}(\tilde{X})$ can be embedded in $\pi_1^{qtop}(X)$. 

\section{\bf QUASITOPOLOGICAL HOMOTOPY GROUPS}
It is well-known that for a pointed topological space  $(X,x)$, for all $n\geq 1$ and $1\leq k\leq n-1$, $\pi_n(X,x)\cong \pi_{n-k}(\Omega^k (X, x), e_x)$. In this section we extend this result for quasitopological homotopy groups and we obtain some results about them. The following theorem is one of the main results of this paper.
\begin{theorem}\label{q1}
Let $(X,x)$ be a pointed  topological space.  Then for all $n\geq 1$ and $1\leq k\leq n-1$,
\[\pi_n^{qtop}(X,x)\cong \pi_{n-k}^{qtop}(\Omega^k (X,x), e_x).\]
where $e_x$ is the constant $k$-loop in $X$ at $x$.
\end{theorem}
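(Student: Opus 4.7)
The strategy is to exhibit an explicit map at the level of loop spaces that is simultaneously a homeomorphism, respects the homotopy equivalence relation, and preserves a concatenation; everything then descends to give an isomorphism of quasitopological groups.

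Concretely, I define the exponential adjunction
\[\Phi:\Omega^n(X,x)\lo \Omega^{n-k}(\Omega^k(X,x),e_x),\qquad \Phi(f)(t)(s)=f(t,s),\]
where $t\in I^{n-k}$ and $s\in I^k$. The boundary condition $f(\partial I^n)=\{x\}$ forces each slice $\Phi(f)(t)$ to lie in $\Omega^k(X,x)$ and $\Phi(f)$ to equal $e_x$ on $\partial I^{n-k}$, so $\Phi$ is well defined. Since $I^k$ is compact Hausdorff, hence locally compact, the exponential law for compact-open topologies guarantees that $\Phi$ is a homeomorphism onto its image, and the subspaces cut out by the two boundary conditions correspond under adjunction.

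Next, a based homotopy $H:I\times I^n\to X$ between two elements of $\Omega^n(X,x)$ adjoints via the same formula to a based homotopy $\wt{H}:I\times I^{n-k}\to \Omega^k(X,x)$, and conversely. Consequently $\Phi$ descends to a bijection
\[\wt{\Phi}:\pi_n^{qtop}(X,x)\lo \pi_{n-k}^{qtop}(\Omega^k(X,x),e_x).\]
Furthermore, concatenation in the first coordinate of $I^{n-k}$ is preserved by $\Phi$, since both sides of $\Phi(f*g)=\Phi(f)*\Phi(g)$ are defined by splitting on the first variable of $I^{n-k}$ and leaving the remaining $I^k$ factor untouched. Thus $\wt{\Phi}$ is a group homomorphism and hence a group isomorphism.

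To finish, I promote $\wt{\Phi}$ from a bijective group homomorphism to a homeomorphism. Let $q_1$ and $q_2$ denote the defining quotient maps for the two quasitopological groups. From $\wt{\Phi}\circ q_1=q_2\circ \Phi$ and the universal property of the quotient topology, $\wt{\Phi}$ is continuous; applying the same argument to $\wt{\Phi}^{-1}\circ q_2=q_1\circ \Phi^{-1}$ gives continuity of the inverse. I anticipate the main obstacle to be a careful point-set check that $\Phi$ is genuinely a homeomorphism between the relevant based mapping subspaces: although the exponential law is classical, one must verify that the subspace topology on $\Omega^{n-k}(\Omega^k(X,x),e_x)$ inside $\mathrm{Map}(I^{n-k},\Omega^k(X,x))$ truly matches the one induced from $\Omega^n(X,x)\subseteq \mathrm{Map}(I^n,X)$, and this step rests on local compactness of $I^k$ and cannot be skipped.
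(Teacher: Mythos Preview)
Your argument is correct and is essentially the same as the paper's: both use the exponential adjunction $\Phi:\Omega^n(X,x)\to\Omega^{n-k}(\Omega^k(X,x),e_x)$ as a homeomorphism (the paper cites Rotman for $f\mapsto f^{\sharp}$ and $g\mapsto g^{\flat}$) and then pass to homotopy classes via the defining quotient maps to obtain an isomorphism of quasitopological groups. You have simply written out in detail the point-set verifications that the paper leaves implicit.
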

\begin{proof}
Consider the following commutative diagram:
\begin{equation}\label{dia}\begin{CD}
\Omega^{n} (X,x)@>\phi>>\Omega^{n-k}(\Omega^k (X,x), e_x)\\
@VV q V@V qVV\\
\pi_n^{qtop}(X,x)@>\phi_*>>\pi_{n-k}^{qtop}(\Omega^k (X,x), e_x),
\end{CD}\end{equation}
where $\phi:\Omega^{n} (X,x)\longrightarrow\Omega^{n-k}(\Omega^k (X,x), e_x)$ given by $\phi(f)=f^{\sharp}$ is a homeomorphism with inverse $g\longmapsto g^{\flat}$ in the sense of \cite{Ro}. Since the map $q$ is a quotient map, the homomorphism $\phi_*$ is an isomorphism between quasitopological homotopy groups.
\end{proof}

The following result is a consequence of Theorem \ref{q1}.

\begin{corollary}
Let $X$ be a locally $(n-1)$-connected. Then $X$ is semilocally $n$-connected at $x$ if and only if $\Omega^{n-1}(X,x)$ is semilocally simply connected at $e_x$, where   $e_x$ is the constant loop in $X$ at $x$.
\end{corollary}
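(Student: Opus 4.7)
The plan is to leverage Theorem~\ref{q1} to reduce the higher-dimensional statement to the classical one-dimensional discreteness criterion, using the discreteness of the relevant quasitopological homotopy group as the common bridge between the two semilocal connectedness conditions.

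First, I would apply Theorem~\ref{q1} in the special case $k=n-1$ to obtain an isomorphism of quasitopological groups
\[
\pi_n^{qtop}(X,x)\;\cong\;\pi_1^{qtop}\bigl(\Omega^{n-1}(X,x),e_x\bigr).
\]
Since the isomorphism is a homeomorphism of quasitopological groups, one side is discrete if and only if the other is.

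Next, I would invoke the Pakdaman--Torabi--Mashayekhy discreteness criterion \cite{P1} in two separate forms. Applied to the space $X$, which is by hypothesis locally $(n-1)$-connected, it gives: $\pi_n^{qtop}(X,x)$ is discrete if and only if $X$ is semilocally $n$-connected at $x$. Applied to the pointed space $(\Omega^{n-1}(X,x),e_x)$ in the special case $n=1$ (which coincides with the Calcut--McCarthy theorem \cite{CM}), it gives: $\pi_1^{qtop}(\Omega^{n-1}(X,x),e_x)$ is discrete if and only if $\Omega^{n-1}(X,x)$ is semilocally simply connected at $e_x$. Chaining these two equivalences through the isomorphism above yields the desired biconditional.

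The subtle point, and the main obstacle, is the hypothesis needed to apply the $n=1$ case to the iterated loop space: the discreteness criterion for $\pi_1^{qtop}$ requires (at least at the basepoint) local path-connectedness. Thus I would need to verify, as a preliminary lemma, that if $X$ is locally $(n-1)$-connected, then $\Omega^{n-1}(X,x)$ is locally path-connected at $e_x$. This follows from the standard subbasic description of the compact-open topology on $\Omega^{n-1}(X,x)$: any basic neighborhood of $e_x$ contains all $(n-1)$-loops whose images lie inside a small neighborhood $U$ of $x$, and for $U$ chosen $(n-1)$-connected, any such $(n-1)$-loop can be nullhomotoped through $(n-1)$-loops in $U$, producing a path in the neighborhood from the given loop to $e_x$. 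Once this local path-connectedness is established, the two applications of the Pakdaman--Torabi--Mashayekhy criterion combine with Theorem~\ref{q1} to complete the proof.
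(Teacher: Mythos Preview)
Your approach is essentially identical to the paper's: apply Theorem~\ref{q1} with $k=n-1$ and invoke the Pakdaman--Torabi--Mashayekhy discreteness criterion on each side. The only difference is that you explicitly verify the local path-connectedness of $\Omega^{n-1}(X,x)$ at $e_x$ needed for the second application, a point the paper passes over silently.
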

\begin{proof}
Since $X$ is a locally $(n-1)$-connected, by \cite[Theorem 6.7]{P1}, $X$ is semilocally $n$-connected at $x$ if and only if  $\pi_n^{qtop}(X,x)$ is discrete. By Theorem \ref{q1}, $\pi_n^{qtop}(X,x)\cong \pi_1^{qtop}(\Omega^{n-1}(X,x), e_x)$. Also  $\pi_1^{qtop}(\Omega^{n-1}(X,x), e_x)$ is discrete if and only if $\Omega^{n-1}(X,x)$ is semilocally simply connected  at $e_x$ by \cite[Theorem 6.7]{P1}.
\end{proof}

Note that the above result has been shown by  Hidekazu Wada \cite [Remark]{Wa} and Authors \cite[Lemma 3.1]{Na} with another methods.

\begin{corollary}
Let $(X,x)=\displaystyle{\lim_{\leftarrow}(X_i,x_i)}$  be the inverse limit  of an inverse system $\{(X_i,x_i),\varphi_{ij}\}_I$. Then for all $n\geq 1$ and $1\leq k\leq n-1$,
\[\pi_n^{qtop}(X,x)\cong \pi_{n-k}^{qtop}(\displaystyle{\lim_{\leftarrow}\Omega^k(X_i,x_i)},e_x).\]
\end{corollary}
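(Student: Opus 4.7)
The plan is to combine Theorem \ref{q1} with the fact that the based loop space functor $\Omega^k$ commutes with inverse limits in the category of pointed topological spaces. First I would apply Theorem \ref{q1} directly to the pointed space $(X,x)=\lim_\leftarrow(X_i,x_i)$, obtaining
\[\pi_n^{qtop}(X,x)\cong \pi_{n-k}^{qtop}(\Omega^k(X,x),e_x).\]
This already matches the desired right hand side except that $\Omega^k(X,x)$ appears in place of $\lim_\leftarrow \Omega^k(X_i,x_i)$. So the whole task reduces to identifying these two spaces, as pointed topological spaces, in a canonical way that sends $e_x$ to $e_x$.

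Next I would construct the natural comparison map
\[\Phi:\Omega^k\bigl(\lim_\leftarrow(X_i,x_i)\bigr)\lo \lim_\leftarrow \Omega^k(X_i,x_i),\qquad \Phi(\gamma)=\bigl(\varphi_i\circ\gamma\bigr)_{i\in I},\]
where $\varphi_i:X\rg X_i$ are the canonical projections. By the universal property of the inverse limit applied pointwise, $\Phi$ is a bijection whose inverse sends a compatible family $(\gamma_i)$ to the unique loop into $\lim_\leftarrow X_i$ induced by the $\gamma_i$. To promote $\Phi$ to a homeomorphism I would use the exponential law: $\Omega^k(-,-)$ is the based mapping space $\mathrm{Map}_*\bigl((I^k,\partial I^k),-\bigr)$ with the compact-open topology, and mapping spaces out of a fixed space commute with products and equalizers in the target, hence with arbitrary inverse limits formed in $\mathrm{Top}_*$. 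Passing to the subspace of basepoint-preserving maps and restricting to the subspace $\lim_\leftarrow X_i\subseteq \prod X_i$ both preserve this, so $\Phi$ is a homeomorphism of pointed spaces sending the constant $k$-loop $e_x$ to the compatible family of constant $k$-loops $(e_{x_i})$.

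Finally I would combine the two steps: since $\Phi$ is a pointed homeomorphism, the functor $\pi_{n-k}^{qtop}(-,*)$ sends it to an isomorphism of quasitopological groups, giving
\[\pi_n^{qtop}(X,x)\cong \pi_{n-k}^{qtop}(\Omega^k(X,x),e_x)\cong \pi_{n-k}^{qtop}\bigl(\lim_\leftarrow \Omega^k(X_i,x_i),e_x\bigr),\]
which is the claim. The main obstacle I anticipate is the careful verification that $\Phi$ is a homeomorphism rather than merely a continuous bijection; the point is that the compact-open topology on maps into a subspace of a product coincides with the subspace topology inherited from the product of compact-open topologies, and this identification must be made compatibly with the pointed structure so that $e_x\mapsto e_x$. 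Everything else is formal once Theorem \ref{q1} and this exponential-law identification are in hand.
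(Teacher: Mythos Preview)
Your proposal is correct and follows the intended route: the paper states this corollary without proof, leaving it as an immediate consequence of Theorem \ref{q1} together with the standard fact that the based loop space functor $\Omega^k$ commutes with inverse limits of pointed spaces. Your argument spells out exactly this, and the justification you give for $\Omega^k\bigl(\lim_\leftarrow X_i\bigr)\cong \lim_\leftarrow \Omega^k(X_i)$ via the compact-open mapping space commuting with products and equalizers in the target is the right one.
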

 Virk \cite{V} introduced the SG (small generated) subgroup of fundamental group $\pi_1(X,x)$, denoted by $\pi_1^{sg}(X,x)$, as the subgroup generated by the following elements $$[\al*\bt*\al^{-1}],$$
where $\al$ is a path in $X$ with initial point $x$ and $\bt$ is a small loop in $X$ at $\al(1)$ .
Recall that a space $X$ is said to be small generated if $\pi_1( X,x)=\pi_1^{sg}(X,x)$, also a space $X$ is said to be semilocally small generated if for every $x\in X$ there exists an open neighborhood $U$ of $x$ such that $i_*\pi_1(U,x) \leq \pi_1^{sg}(X,x)$. Torabi et al. \cite{Ts} proved that if $X$ is small generated space, then  $\pi_1^{qtop}(X,x)$ is an indiscrete
topological group and the quasitopological fundamental group of a semilocally small generated space is a topological group. By Theorem \ref{q1}, we obtain several results in quasitopological homotopy groups as follows:

\begin{corollary}
Let $X$ be a topological space such that $\Omega^{n-1}(X,x)$ is small generated. Then $\pi_n^{qtop}(X,x)$ is an indiscrete
topological group.
\end{corollary}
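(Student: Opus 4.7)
The plan is to reduce this to the known fundamental-group case via Theorem~\ref{q1}. First I would invoke Theorem~\ref{q1} with $k=n-1$ to obtain an isomorphism of quasitopological groups
\[
\pi_n^{qtop}(X,x)\cong \pi_1^{qtop}(\Omega^{n-1}(X,x),e_x).
\]
This transfers the entire question from the $n$th quasitopological homotopy group of $X$ to the quasitopological fundamental group of the iterated loop space $\Omega^{n-1}(X,x)$.

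Next I would apply the hypothesis: since $\Omega^{n-1}(X,x)$ is small generated, the result of Torabi et al.\ cited just above the corollary guarantees that $\pi_1^{qtop}(\Omega^{n-1}(X,x),e_x)$ is an indiscrete topological group. Combining with the isomorphism in the previous paragraph yields that $\pi_n^{qtop}(X,x)$ is indiscrete, and being isomorphic (as a quasitopological group) to a topological group, it is itself a topological group.

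There is essentially no obstacle here beyond checking that the isomorphism of Theorem~\ref{q1} genuinely transports the topological-group property and the indiscrete topology. The former is automatic because $\phi_*$ in diagram~(\ref{dia}) is a homeomorphism and a group isomorphism simultaneously, so the continuity of multiplication and inversion is preserved in both directions; the latter is automatic because a space homeomorphic to an indiscrete space is indiscrete. Thus the corollary follows immediately by chaining Theorem~\ref{q1} with the Torabi et al.\ theorem, and no further argument is required.
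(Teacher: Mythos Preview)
Your proof is correct and follows essentially the same approach as the paper: apply Theorem~\ref{q1} with $k=n-1$ and combine it with the cited result of Torabi et al.\ that small generated spaces have indiscrete quasitopological fundamental group. The paper's proof is slightly terser (it omits your explicit remarks about transporting the topological-group property and indiscreteness across the isomorphism), but the logic is identical.
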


\begin{proof}
Since $\Omega^{n-1}(X,x)$ is a small generated space, then $\pi_1^{qtop}(\Omega^{n-1}(X,x), e_x)$ is an indiscrete
topological group, by \cite[Remark 2.11]{Ts}. Therefore $\pi_n^{qtop}(X,x)\cong \pi_1^{qtop}(\Omega^{n-1}(X,x), e_x)$ implies that $\pi_n^{qtop}(X,x)$ is an indiscrete
topological group.
\end{proof}

\begin{corollary}
Let $X$ be a topological space such that $\Omega^{n-1}(X,x)$ is a semilocally small generated space. Then $\pi_n^{qtop}(X,x)$ is a
topological group.
\end{corollary}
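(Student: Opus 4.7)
The plan is to mimic exactly the structure of the previous corollary, substituting the appropriate cited result about semilocally small generated spaces for the indiscreteness statement used before. The whole argument is a two-step reduction: first replace the $n$-dimensional quasitopological homotopy group by a $1$-dimensional one on the loop space, then invoke a known theorem about $\pi_1^{qtop}$ of semilocally small generated spaces.

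First I would invoke Theorem \ref{q1} with $k = n-1$, which yields the isomorphism of quasitopological groups
\[
\pi_n^{qtop}(X,x)\;\cong\;\pi_1^{qtop}(\Omega^{n-1}(X,x),e_x).
\]
Since the isomorphism in Theorem \ref{q1} is induced by the homeomorphism $\phi$ between loop spaces and passes to the quotient topology, it is in particular a homeomorphism of the underlying quasitopological groups, so topological group structure on one side transfers to the other.

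Next I would apply the result of Torabi et al.\ quoted just before the statement: if $Y$ is semilocally small generated, then $\pi_1^{qtop}(Y,y)$ is a topological group. Taking $Y = \Omega^{n-1}(X,x)$ and $y = e_x$, the hypothesis gives that $\pi_1^{qtop}(\Omega^{n-1}(X,x),e_x)$ is a topological group. Combined with the isomorphism above, this forces $\pi_n^{qtop}(X,x)$ to be a topological group as well, completing the proof.

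There is essentially no obstacle here: the only subtle point is verifying that the isomorphism in Theorem \ref{q1} is strong enough to transport the property of being a topological group (not just being an abstract group), but this is automatic from the construction in diagram \eqref{dia}, since $\phi$ is a homeomorphism and $\phi_\ast$ is obtained by passing to the quotient on both sides via the same quotient map $q$. Hence no additional work is needed beyond citing Theorem \ref{q1} and the result from \cite{Ts}.
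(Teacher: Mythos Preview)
Your proposal is correct and follows essentially the same approach as the paper: apply the cited result from \cite{Ts} to conclude that $\pi_1^{qtop}(\Omega^{n-1}(X,x),e_x)$ is a topological group, then transfer this via the isomorphism of Theorem \ref{q1}. Your extra remark that the isomorphism in Theorem \ref{q1} is a homeomorphism (so that it transports the topological-group property) is a helpful clarification that the paper leaves implicit.
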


\begin{proof}
Since $\Omega^{n-1}(X,x)$ is semilocally small generated, then $\pi_1^{qtop}(\Omega^{n-1}(X,x), e_x)$ is a
topological group, by \cite[Theorem 4.1]{Ts}. Therefore $\pi_n^{qtop}(X,x)\cong \pi_1^{qtop}(\Omega^{n-1}(X,x), e_x)$ implies that $\pi_n^{qtop}(X,x)$ is a
topological group.
\end{proof}

Fabel \cite{F2}
proved that $\pi_1^{qtop}(HE,x)$ is not topological group. By considering the proof of this result it seems that if $\pi_1(X,x)$ is an abelian group, then $\pi_1^{qtop}(X,x)$ is a topological group.
He \cite{F3} also showed that for each $n \geq 2$ there exists a compact, path connected, metric
space X such that $\pi_n^{qtop}(X,x)$ is not a topological group. In the following example we show that there is a metric space $Y$ with abelian fundamental group such that $\pi_1^{qtop}(Y,y)$ is not a topological group.

\begin{example}
Let $n \geq 2$, $X$ be the compact, path connected, metric
space introduced in \cite{F3} such that $\pi_n^{qtop}(X,x)$ is not a topological group. By Theorem \ref{q1} $\pi_1^{qtop}(\Omega^{n-1}(X,x),  e_x)$ is not a topological group. Since for every $n \geq 2$, $\pi_n(X,x)$ is an abelian group, hence there is a metric space $Y = \Omega^{n-1}(X,x)$ with abelian fundamental group such that $\pi_1^{qtop}(Y,y)$ is not a topological group.
\end{example}

In \cite[Proposition 3.25]{Bra},  it is proved
that the quasitopological fundamental groups of shape injective spaces are Hausdorff. By Theorem \ref{q1} we have the following result.

\begin{corollary}\label{co}
Let $X$ be a topological space such that $\Omega^{n-1}(X,x)$ is shape injective space. Then $\pi_n^{qtop}(X,x)$ is Hausdorff.
\end{corollary}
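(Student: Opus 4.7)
The plan is to reduce the statement to the case $n=1$ via Theorem \ref{q1} and then invoke the cited result of Brazas. First I would apply Theorem \ref{q1} with $k=n-1$ to obtain the isomorphism of quasitopological groups
\[
\pi_n^{qtop}(X,x)\cong \pi_1^{qtop}(\Omega^{n-1}(X,x),e_x).
\]
Since an isomorphism in the category of quasitopological groups is in particular a homeomorphism, the Hausdorff property is transported across it, so it suffices to show that the right-hand side is Hausdorff.

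For that, I would appeal directly to \cite[Proposition 3.25]{Bra}, applied to the space $\Omega^{n-1}(X,x)$ pointed at $e_x$. By hypothesis this space is shape injective, so its quasitopological fundamental group at $e_x$ is Hausdorff. Combining this with the isomorphism above gives the desired conclusion.

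The argument is essentially a two-line application of the preceding theorem, and the only point that deserves attention is to make sure the isomorphism in Theorem \ref{q1} really is a homeomorphism (not merely an algebraic isomorphism), which it is, since it is induced by the homeomorphism $\phi\colon \Omega^n(X,x)\to \Omega^{n-1}(\Omega^{n-1}(X,x),e_x)$ passed to the quotient on both sides. Thus there is no genuine obstacle; the work has already been done by Theorem \ref{q1} and \cite[Proposition 3.25]{Bra}.
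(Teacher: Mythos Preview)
Your proposal is correct and matches the paper's approach exactly: the paper does not give a separate proof but simply notes that the corollary follows ``By Theorem \ref{q1}'' together with \cite[Proposition 3.25]{Bra}, which is precisely the two-step argument you spell out.
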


\begin{proposition}\cite{Ts}
For a pointed topological space $(X,x)$, if $\{[e_x]\}$ is closed (or equivalently the topology of $\pi_1^{qtop}(X,x)$ is $T_0$), then $X$ is homotopically Hausdorff.
\end{proposition}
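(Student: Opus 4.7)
The plan is to argue by contrapositive: assuming $X$ is not homotopically Hausdorff at $x$, I will produce a point of $\ov{\{[e_x]\}}$ distinct from $[e_x]$ itself. So suppose there is a nontrivial class $[\al]\in \pi_1(X,x)$ with the property that every open neighborhood $U$ of $x$ contains a loop $\bt_U$ based at $x$ satisfying $[\bt_U]=[\al]$.

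First I would show that the net $(\bt_U)_U$, indexed by the neighborhood filter of $x$ under reverse inclusion, converges to the constant loop $e_x$ in the compact-open topology on $\Omega(X,x)$. A basic open neighborhood of $e_x$ has the form $\bigcap_{i=1}^m W(K_i,V_i)$ with $x\in V_i$ for each $i$; taking $V=\bigcap_i V_i$, any loop whose image lies in $V$ is in this neighborhood, and by hypothesis one can select $\bt_U$ with $U\sub V$. Applying the continuous quotient map $q:\Omega(X,x)\lo \pi_1^{qtop}(X,x)$ to this convergent net yields that the constant net with value $q(\bt_U)=[\al]$ converges to $q(e_x)=[e_x]$, which is exactly the statement $[e_x]\in \ov{\{[\al]\}}$.

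Next I would exploit the fact that $\pi_1^{qtop}(X,x)$ is a quasitopological group in the sense of Brazas, so left translation $L_{[\al]^{-1}}$ is a self-homeomorphism. This homeomorphism sends $\{[\al]\}$ to $\{[e_x]\}$ and $[e_x]$ to $[\al]^{-1}$, hence it sends $\ov{\{[\al]\}}$ to $\ov{\{[e_x]\}}$. Combined with the previous step this gives $[\al]^{-1}\in \ov{\{[e_x]\}}$; since $[\al]\neq [e_x]$ we have $[\al]^{-1}\neq [e_x]$, contradicting that $\{[e_x]\}$ is closed.

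The main obstacle will be that the compact-open topology on $\Omega(X,x)$ is not in general first countable, so the argument must use nets rather than sequences; phrasing things through the neighborhood filter of $x$ handles this cleanly. A secondary point worth flagging is that inversion in a quasitopological group need not be continuous, so homogeneity of the topology must be invoked through the (continuous) translations, as above, rather than through the inversion map. The equivalence with the $T_0$ reformulation in the statement follows from the same homogeneity argument, since in a quasitopological group all singletons are homeomorphic to $\{[e_x]\}$ via translations.
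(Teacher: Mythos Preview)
Your argument is correct and self-contained. Note, however, that the paper does not supply its own proof of this proposition: it is quoted verbatim from \cite{Ts} and then used as a black box in the proof of the $n$-dimensional generalization (Proposition~\ref{13}). So there is no proof in the present paper to compare your approach against; your write-up would serve as an independent justification of the cited result.

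One small correction to your closing remark: in a quasitopological group in the sense of Arhangel'skii--Tkachenko \cite{A} (the sense used throughout, following \cite{Br}), inversion \emph{is} continuous by definition; what may fail is \emph{joint} continuity of multiplication. Your proof is unaffected, since you only invoke translations, but the parenthetical caution about inversion is misplaced. Indeed, continuity of inversion together with separate continuity of multiplication is precisely what makes the equivalence ``$\{[e_x]\}$ closed $\Leftrightarrow$ $T_0$'' in the statement go through.
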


We generalized the above proposition as follows:

\begin{proposition}\label{13}
For a pointed topological space $(X,x)$, if $\{[e_x]\}$ is closed (or equivalently the topology of $\pi_n^{qtop}(X,x)$ is $T_0$), then $X$ is n-homotopically Hausdorff.
\end{proposition}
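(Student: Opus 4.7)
The strategy is to bootstrap from the $n=1$ case (the preceding proposition) by means of Theorem \ref{q1}. Applying that theorem with $k = n-1$ gives an isomorphism of quasitopological groups
\[ \pi_n^{qtop}(X,x) \cong \pi_1^{qtop}(\Omega^{n-1}(X,x), e_x), \]
which in particular is a homeomorphism of underlying spaces. Hence the singleton $\{[e_x]\}$ is closed on the left if and only if $\{[e_{e_x}]\}$ is closed on the right. The preceding proposition then yields that $\Omega^{n-1}(X,x)$ is homotopically Hausdorff at $e_x$, and it remains to convert this into the statement that $X$ is $n$-homotopically Hausdorff at $x$.

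For this translation, let $[\alpha] \in \pi_n(X,x)$ be nontrivial. Its adjoint $\alpha^\sharp = \phi(\alpha)$, in the notation of diagram~(\ref{dia}), represents a nontrivial element of $\pi_1(\Omega^{n-1}(X,x), e_x)$. Homotopical Hausdorffness at $e_x$ produces an open neighborhood $V$ of $e_x$ in $\Omega^{n-1}(X,x)$ such that no loop with image in $V$ is rel-basepoint homotopic to $\alpha^\sharp$. The plan is now to pass to an open $U \subseteq X$ with $x \in U$ satisfying
\[ W(I^{n-1}, U) := \{f \in \Omega^{n-1}(X,x) : f(I^{n-1}) \subseteq U\} \subseteq V; \]
this is possible because any basic compact-open neighborhood $\bigcap_i W(K_i, O_i)$ of $e_x$ sitting inside $V$ forces $x \in O_i$ for every $i$ (since $e_x$ is constant), so $U := \bigcap_i O_i$ works. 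Finally, any $n$-loop $\gamma : I^n \to X$ at $x$ with $\gamma(I^n) \subseteq U$ has $\gamma^\sharp(t)(I^{n-1}) = \gamma(\{t\} \times I^{n-1}) \subseteq U$ for each $t$, so $\gamma^\sharp$ is a loop wholly inside $W(I^{n-1}, U) \subseteq V$. Consequently $\gamma^\sharp \not\simeq \alpha^\sharp$, equivalently $\gamma \not\simeq \alpha$, which witnesses $n$-homotopical Hausdorffness at $x$.

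The main obstacle is the neighborhood-transfer step in the middle paragraph: one must verify that a compact-open neighborhood of the constant map $e_x \in \Omega^{n-1}(X,x)$ can always be shrunk to a ``uniform'' neighborhood of the form $W(I^{n-1}, U)$ coming from a single open $U \subseteq X$ containing $x$, and then that the adjunction $\gamma \leftrightarrow \gamma^\sharp$ respects the property ``having image in $U$.'' Both points are elementary once one uses that $e_x$ is constant, but they constitute the only content of the argument that is not already packaged inside Theorem \ref{q1} and the preceding proposition.
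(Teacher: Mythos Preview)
Your proof is correct and follows the same three-step strategy as the paper: transfer the $T_0$ hypothesis to $\pi_1^{qtop}(\Omega^{n-1}(X,x), e_x)$ via Theorem~\ref{q1}, apply the $n=1$ case to obtain homotopical Hausdorffness of the loop space, and then translate this back to $n$-homotopical Hausdorffness of $X$. The only difference is that the paper delegates the final translation step to \cite[Lemma~3.5]{Na}, whereas you supply a direct compact-open argument in its place; your argument for that step is sound.
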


\begin{proof}
By Theorem \ref{q1} since $\pi_n^{qtop}(X,x)$ is $T_0$, hence $\pi_1^{qtop}(\Omega^{n-1}(X,x), e_x)$ is $T_0$. Therefore by previous proposition $\Omega^{n-1}(X,x)$ is homotopically Hausdorff which implies that $X$ is n-homotopically Hausdorff by \cite[Lemma 3.5]{Na}.
\end{proof}

\begin{corollary}
Let $X$ be a topological space such that $\Omega^{n-1}(X,x)$ is shape injective space. Then $X$ is n-homotopically Hausdorff.
\end{corollary}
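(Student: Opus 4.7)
The plan is to obtain the conclusion by chaining together Corollary \ref{co} with Proposition \ref{13}. First I would apply Corollary \ref{co} to the standing hypothesis that $\Omega^{n-1}(X,x)$ is shape injective, which immediately gives that $\pi_n^{qtop}(X,x)$ is Hausdorff. Every Hausdorff space is automatically $T_0$, so in particular the singleton $\{[e_x]\}$ is closed in $\pi_n^{qtop}(X,x)$, which is precisely the condition appearing in the hypothesis of Proposition \ref{13}. Invoking that proposition then yields that $X$ is $n$-homotopically Hausdorff.

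The only nontrivial content is packaged inside the two cited statements themselves; the bridging step, namely the implication $T_2 \Rightarrow T_0$, is a standard separation-axiom fact and needs no separate justification. I do not foresee any real obstacle, since Proposition \ref{13} already performs the passage to the loop space internally (via Theorem \ref{q1} and \cite[Lemma 3.5]{Na}), while the shape-injective hypothesis on $\Omega^{n-1}(X,x)$ is exactly what Corollary \ref{co} consumes to deliver Hausdorffness of $\pi_n^{qtop}(X,x)$. The only point to verify is that the basepoint $x$ (and the induced constant loop $e_x$) is handled consistently across the two statements, which is immediate from their formulations.
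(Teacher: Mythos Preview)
Your proposal is correct and matches the paper's own proof, which simply states that the result follows from Corollary~\ref{co} and Proposition~\ref{13}. Your added observation that Hausdorff implies $T_0$ is the obvious bridging step implicit in the paper's one-line argument.
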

\begin{proof}
It follows from Corollary \ref{co} and Proposition \ref{13}.
\end{proof}

Let $(B,b_0)$ be a pointed space and $p:E\longrightarrow B$ be a fibration with fiber $F$. Consider its mapping fiber, $Mp=\{(e,\omega)\in E\times B^I: \omega(0)=b_0 \ \ \text{and} \ \ \omega(1)=p(e) \}$. If  $\tilde{b_0}\in p^{-1}(b_0)$, then the injection map $k:\Omega(B,b_0)\longrightarrow Mp$ given by $k(\omega)=(\tilde{b_0}, \omega)$ induces a homomorphism $f_*: \pi_{n}(B,b_0)\longrightarrow  \pi_{n-1}(F,\tilde{b_0})$ \cite{Ro}.
\begin{theorem}
Let $(B,b_0)$ be a pointed space and $p:E\longrightarrow B$ be a fibration. If $\tilde{b_0}\in p^{-1}(b_0)$, then $f_*: \pi_{n}^{qtop}(B,b_0)\longrightarrow  \pi_{n-1}^{qtop}(F,\tilde{b_0})$ is continuous, for all $n\geq 1$.
\end{theorem}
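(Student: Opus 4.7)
My plan is to factor $f_*$ as a composition of continuous homomorphisms, using the general principle that any continuous map of pointed spaces $h:(X,x_0)\to(Y,y_0)$ induces a continuous homomorphism $h_*:\pi_m^{qtop}(X,x_0)\to\pi_m^{qtop}(Y,y_0)$. Indeed, $\Omega^m h$ is continuous on loop spaces, and since $q:\Omega^m Y\to\pi_m^{qtop}(Y)$ is a quotient map, the universal property yields continuity of $h_*$.

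First I would apply Theorem \ref{q1} with $k=n-1$ to obtain a continuous isomorphism
\[
\phi_*:\pi_n^{qtop}(B,b_0)\longrightarrow\pi_{n-1}^{qtop}(\Omega(B,b_0),e_{b_0}),
\]
which is actually a homeomorphism: its proof factors through the self-homeomorphism $\Omega^n B\to\Omega^{n-1}(\Omega B)$ on loop spaces and uses only quotient universal properties, so both $\phi_*$ and $\phi_*^{-1}$ are continuous. Next, the inclusion $k:\Omega(B,b_0)\to Mp$, $\omega\mapsto(\tilde b_0,\omega)$, is continuous as a map of topological spaces (it is just the inclusion of a slice of $E\times B^I$), so the principle above yields a continuous homomorphism
\[
k_*:\pi_{n-1}^{qtop}(\Omega(B,b_0),e_{b_0})\longrightarrow\pi_{n-1}^{qtop}(Mp,(\tilde b_0,c_{b_0})).
\]

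Finally I would use the standard fact that when $p$ is a (Hurewicz) fibration the inclusion $j:F\hookrightarrow Mp$, $\tilde b\mapsto(\tilde b,c_{b_0})$, admits a continuous homotopy inverse $r:Mp\to F$, obtained by sending $(e,\omega)\in Mp$ to the endpoint of the lift of $\omega^{-1}$ starting at $e$ via a continuous path-lifting function. Then $j_*$ and $r_*$ are continuous mutual inverses between $\pi_{n-1}^{qtop}(F,\tilde b_0)$ and $\pi_{n-1}^{qtop}(Mp)$, and
\[
f_*=r_*\circ k_*\circ\phi_*
\]
is continuous as a composition of continuous maps, for every $n\geq 1$.

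The main obstacle is ensuring that the classical algebraic isomorphism $\pi_{n-1}(F)\cong\pi_{n-1}(Mp)$ upgrades to a \emph{homeomorphism} of quasitopological groups, which is what makes the continuity of $r_*$ the delicate ingredient and forces one to appeal to a continuous path-lifting function for $p$. To sidestep writing out $r_*$ as a separate map, one can equivalently describe the full composition at the loop-space level by $\alpha\mapsto r\circ k\circ\wt\alpha$, where $\wt\alpha\in\Omega^{n-1}(\Omega B)$ is the exponential adjoint of $\alpha\in\Omega^n B$, and then deduce continuity of $f_*$ in a single step from the quotient property of $q:\Omega^{n-1}F\to\pi_{n-1}^{qtop}(F)$.
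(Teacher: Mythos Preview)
Your proof is correct and follows essentially the same route as the paper: factor $f_*$ as $k_*\circ\phi_*$, where $\phi_*$ is the isomorphism of Theorem~\ref{q1} and $k_*$ is induced by the continuous inclusion $k:\Omega(B,b_0)\to Mp$, using the quotient property of $q$ to deduce continuity of $k_*$. The paper's argument actually leaves the identification $\pi_{n-1}^{qtop}(Mp)\cong\pi_{n-1}^{qtop}(F)$ implicit, so your explicit appeal to a continuous path-lifting function to produce a continuous homotopy inverse $r:Mp\to F$ is a welcome elaboration rather than a departure from the paper's method.
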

\begin{proof}
We consider  the following commutative diagram:
\begin{equation}\label{dia}\begin{CD}
\Omega^{n-1}(\Omega(B,b_0), e_{b_0})@>k_{\sharp}>>\Omega^{n-1}(Mp,*)\\
@VV q V@V qVV\\
\pi_{n-1}^{qtop}(\Omega(B,b_0),  e_{b_0})@>k_*>>\pi_{n-1}^{qtop}(Mp,*),
\end{CD}\end{equation}
where $q$ is the quotient  map and $k_{\sharp}$ is the induced map of $k:\Omega(B, b_0)\longrightarrow Mp$ by the functor $\Omega^{n-1}$. Since $k_{\sharp}$ is continuous and $q$ is a quotient map, $k_*: \pi_{n-1}^{qtop}(\Omega(B,b_0),  e_{b_0})\longrightarrow \pi_{n-1}^{qtop}(F,\tilde{b_0})$ is continuous. By Theorem \ref{q1}, $\pi_{n-1}^{qtop}(\Omega(B,b_0),  e_{b_0})$ is isomorphic to $\pi_{n}^{qtop}(B,b_0)$. Therefore $f_*: \pi_{n}^{qtop}(B,b_0)\longrightarrow  \pi_{n-1}^{qtop}(F,\tilde{b_0})$ is continuous.
\end{proof}
\section{{\bf LONG EXACT SEQUENCE OF $\pi_{n}^{qtop}(X)$}}

Brazas \cite[Theorem 2.49]{Br3} proved that for every based pair (X, A) with inclusion $i:A\longrightarrow X$, there is a
long exact sequence in the category of quasitopological groups as follows:
\begin{align*}
...&\longrightarrow\pi_{n+1}^{qtop}(A)\rightarrow \pi_{n+1}^{qtop}(X)\rightarrow \pi_{n+1}^{qtop}(X,A)\rightarrow \pi_{n}^{qtop}(A)\longrightarrow ...\\
&\cdots\longrightarrow\pi_1^{qtop}(X)\longrightarrow\pi_1^{qtop}(X,A)\longrightarrow\pi_0^{qtop}(A)\longrightarrow\pi_0^{qtop}(X).
\end{align*}
He \cite[Proposition 2.20]{Br3} also showed that for every fibration $p:E\longrightarrow B$ of path connected spaces with
fiber $F$, there is a long exact sequence in the category of quasitopological groups as follows:
\begin{multline}\label{seq}
\cdots \longrightarrow\pi_n^{qtop}(E)\rightarrow \pi_n^{qtop}(B)\rightarrow \pi_{n-1}^{qtop}(F)\rightarrow \pi_{n-1}^{qtop}(E)\longrightarrow \cdots\\
\cdots\longrightarrow\pi_1^{qtop}(B)\longrightarrow\pi_0^{qtop}(F)\longrightarrow\pi_0^{qtop}(E)\longrightarrow\pi_0^{qtop}(B).
\end{multline}

 In follow, we obtain some results and examples by these exact sequences.
\begin{example}
Consider the pointed pair $(HA,HE)$, where $HA$ is the harmonic archipelago and $HE$ is the hawaiian earring. Then by \cite[Theorem 2.49]{Br3}, there is a long exact sequence in qTop:
\begin{align*}
...&\longrightarrow\pi_{n+1}^{qtop}(HE)\overset {}{\rightarrow} \pi_{n+1}^{qtop}(HA)\overset {}{\rightarrow} \pi_{n+1}^{qtop}(HA,HE)\overset {}{\rightarrow} \pi_{n}^{qtop}(HE)\longrightarrow ...\\
&\cdots\longrightarrow\pi_1^{qtop}(HA)\longrightarrow\pi_1^{qtop}(HA,HE)\longrightarrow\pi_0^{qtop}(HE)\longrightarrow\pi_0^{qtop}(HA).
\end{align*}

\end{example}

 Recall that a short exact sequence $E: 0\longrightarrow H\overset {i}{\rightarrow}  X \overset {\pi}{\rightarrow}  G\longrightarrow 0$ of topological abelian groups will be called an extension of topological groups if both $i$ and $\pi$ are continuous and open homomorphisms when considered as maps onto their images. Also, the extension $E$ is called split if and only if it is equivalent to the trivial extension $E_0: 0\longrightarrow H\overset {i_H}{\rightarrow}  H\times G \overset {\pi_G}{\rightarrow}  G\longrightarrow 0$ \cite{Be}.

\begin{theorem}\cite[Theorem 1.2]{Be}
Let $E: 0\longrightarrow H\overset {i}{\rightarrow}  X \overset {\pi}{\rightarrow}  G\longrightarrow 0$  be an extension of topological abelian groups. The following are equivalent:\\
(1)$E$ splits.\\
(2)There exists a right inverse for $\pi$.\\
(3)There exists a left inverse for $i$.
\end{theorem}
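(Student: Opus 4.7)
The plan is to run the classical splitting lemma for abelian groups and then insist, at every step, on the two openness hypotheses built into the definition of an extension of topological groups, which are exactly what is needed to upgrade algebraic inverses to continuous ones. I would prove the cycle $(1)\Rightarrow(2)\Rightarrow(3)\Rightarrow(2)\Rightarrow(1)$, where the final arrow reuses the section produced in the preceding step.

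First, $(1)\Rightarrow(2)$ is formal: if $\phi:X\to H\times G$ realizes the equivalence with the trivial extension $E_0$, then $s:=\phi^{-1}\circ j_G$, with $j_G(g)=(0,g)$, is a continuous homomorphic right inverse of $\pi$. For $(2)\Rightarrow(3)$, given such an $s$, the element $x-s(\pi(x))$ lies in $\ker\pi=i(H)$ for every $x\in X$; since by hypothesis $i:H\to i(H)$ is a continuous open homomorphism, it is a topological group isomorphism onto its image, so
\[
r(x)\;:=\;i^{-1}\bigl(x-s(\pi(x))\bigr)
\]
is a well-defined continuous homomorphism $X\to H$, and a direct computation gives $r\circ i=\mathrm{id}_H$.

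For $(3)\Rightarrow(2)$, given a continuous homomorphic retraction $r$, set $\psi(x):=x-i(r(x))$; this is a continuous endomorphism of $X$ that vanishes on $i(H)$, hence factors set-theoretically through $X/i(H)$. Because $\pi$ is continuous, surjective and open, it is a topological quotient map and induces a homeomorphism $X/i(H)\cong G$, so $\psi$ descends to a continuous homomorphism $s:G\to X$; from $\pi\circ\psi=\pi$ and surjectivity of $\pi$ one reads off $\pi\circ s=\mathrm{id}_G$. Finally, $(2)\Rightarrow(1)$ is again formal: the map $\phi:H\times G\to X$ given by $\phi(h,g):=i(h)+s(g)$ is a continuous homomorphism making the evident diagram commute, with two-sided inverse $x\mapsto(r(x),\pi(x))$ for $r$ constructed as above, and continuity of both $\phi$ and $\phi^{-1}$ is immediate.

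The main obstacle is not algebraic — the splitting lemma for abelian groups is instantaneous — but topological, namely ensuring that the algebraically defined inverses are continuous. This is exactly where the hypothesis that $i$ and $\pi$ are open onto their images enters: it lets one invert $i$ continuously on $i(H)$ and treat $\pi$ as a topological quotient, and without these two ingredients the implications can fail for general continuous extensions of topological abelian groups.
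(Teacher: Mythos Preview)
The paper does not supply its own proof of this statement: it is quoted verbatim as \cite[Theorem 1.2]{Be} and used as a black box, so there is nothing to compare your argument against in the present paper. Your proof is correct and is the standard topological splitting lemma; the key observations---that openness of $i$ onto its image makes $i^{-1}:i(H)\to H$ continuous, and that openness and surjectivity of $\pi$ make it a quotient map so that maps killing $\ker\pi$ descend continuously---are exactly the right ones, and your cycle $(1)\Rightarrow(2)\Rightarrow(3)\Rightarrow(2)\Rightarrow(1)$ goes through as written.
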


The above results hold for quasitopological groups, too.
\begin{proposition}
If $r:X\longrightarrow A$ is a retraction, then there are isomorphisms in quasitopological groups, for all $n\geq 2$,
\[\pi_n^{qtop}(X)\cong \pi_n^{qtop}(A)\times \pi_n^{qtop}(X,A).\]
\end{proposition}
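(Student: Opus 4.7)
The plan is to apply the long exact sequence of the pair $(X,A)$ in qTop (recalled at the start of this section) and use the continuous splitting provided by $r$ together with the quasitopological version of Theorem 3.2 to split the resulting short exact sequences for $n\geq 2$.

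First I would observe that if $i:A\hookrightarrow X$ denotes the inclusion, then $r\circ i=\mathrm{id}_A$, so by functoriality the induced maps $r_*$ and $i_*$ are continuous homomorphisms of quasitopological groups with $r_*\circ i_*=\mathrm{id}_{\pi_n^{qtop}(A)}$. In particular $i_*$ is injective, and exactness of the long sequence at $\pi_n^{qtop}(A)$ then forces the connecting homomorphism $\partial:\pi_{n+1}^{qtop}(X,A)\to\pi_n^{qtop}(A)$ to be zero. Consequently, for each $n\geq 2$ the long exact sequence collapses into a short exact sequence
\[0\longrightarrow \pi_n^{qtop}(A)\xrightarrow{i_*}\pi_n^{qtop}(X)\xrightarrow{j_*}\pi_n^{qtop}(X,A)\longrightarrow 0\]
in qTop. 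Because $n\geq 2$, all three groups are abelian, so the quasitopological analogue of Theorem 3.2 (asserted in the paragraph immediately above the statement) applies: the continuous left inverse $r_*$ of $i_*$ makes the extension split, yielding the asserted isomorphism $\pi_n^{qtop}(X)\cong \pi_n^{qtop}(A)\times \pi_n^{qtop}(X,A)$ of quasitopological groups. Explicitly the isomorphism is $\alpha\mapsto(r_*(\alpha),j_*(\alpha))$, continuous by construction; its inverse has the form $(a,y)\mapsto i_*(a)+\sigma(y)$ for a continuous section $\sigma$ of $j_*$ satisfying $r_*\circ\sigma=0$, and producing such a continuous $\sigma$ is exactly the content of Theorem 3.2.

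The main obstacle is checking that the displayed short exact sequence really is an extension in the technical sense demanded by Theorem 3.2, namely that $i_*$ and $j_*$ are open onto their images. For $i_*$ this is a free gift: the continuous left inverse $r_*$ makes $i_*$ a topological embedding onto its image. For $j_*$, which is surjective by the vanishing of $\partial$, openness has to be extracted from Brazas's construction of $\pi_n^{qtop}(X,A)$ as a quotient of the space of relative $n$-loops, together with the fact that $j_*$ is induced by the canonical inclusion of based loops into relative loops; once one identifies $j_*$ as a quotient map at the space level, the openness follows and the rest of the argument is a purely formal diagram chase.
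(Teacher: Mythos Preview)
Your proposal is correct and follows essentially the same route as the paper: invoke the long exact sequence of the pair in qTop, use the retraction to see that $i_*$ is split injective (hence $\partial=0$) and obtain a short exact sequence, then apply the quasitopological version of Theorem~3.2 via the continuous left inverse $r_*$. You are in fact more careful than the paper, which simply asserts that $i_*$ and $j_*$ are open onto their images; your observation that $r_*$ forces $i_*$ to be an embedding, and your remark that openness of $j_*$ must be read off from the quotient description of $\pi_n^{qtop}(X,A)$, are exactly the points the paper glosses over.
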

\begin{proof}
Consider the pointed pair $(X,A)$. By \cite[Theorem 2.49]{Br3}, there is a long exact sequence
\begin{align*}
\cdots\longrightarrow\pi_{n+1}^{qtop}(X)\longrightarrow \pi_{n+1}^{qtop}(X,A)\longrightarrow\pi_{n}^{qtop}(A)\overset {i_*}{\rightarrow} \pi_{n}^{qtop}(X)\longrightarrow \pi_{n}^{qtop}(X,A)\longrightarrow \cdots.
\end{align*}
Since $r$ is a retraction  and $i_*$ is an injection, there is a short exact sequence
\[0\longrightarrow \pi_n^{qtop}(A)\overset {i_*}{\rightarrow}  \pi_n^{qtop}(X) \overset {}{\rightarrow}  \pi_n^{qtop}(X,A)\longrightarrow 0.\] Moreover, this sequence is an extension. Indeed, the map $i_*$ and $\pi_*$ are continuous and open homomorphisms when considered as maps onto their images. Therefore \[\pi_n^{qtop}(X)\cong \pi_n^{qtop}(A)\times \pi_n^{qtop}(X,A).\]
\end{proof}
\begin{proposition}
Let $B\subseteq A\subseteq X$ be pointed spaces. Then there is a long exact sequence of the triple $(X,A,B)$ in qTop:
\[\cdots\longrightarrow\pi_{n+1}^{qtop}(X,A)\longrightarrow\pi_{n}^{qtop}(A,B)\longrightarrow \pi_{n}^{qtop}(X,B)\longrightarrow\pi_{n}^{qtop}(X,A)\longrightarrow \pi_{n-1}^{qtop}(A,B)\longrightarrow\cdots .\]
\end{proposition}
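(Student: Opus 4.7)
The plan is to derive the triple sequence by assembling the three long exact sequences of pairs $(X,A)$, $(X,B)$, and $(A,B)$ in qTop (all provided by \cite[Theorem~2.49]{Br3}) via a classical diagram chase, and then to verify that each newly introduced map is continuous as a homomorphism of quasitopological groups. Since exactness is an algebraic statement and the pair sequences are already exact in qTop, the only genuinely new content is the continuity of the triple-boundary and the inclusion-induced maps.

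First I would write the three LES of pairs simultaneously, arranged as a commutative braid induced by the inclusions $B\hookrightarrow A\hookrightarrow X$. Then I would define the three maps in the triple sequence explicitly:
\begin{enumerate}
\item[(i)] $j_*\colon \pi_n^{qtop}(A,B)\longrightarrow\pi_n^{qtop}(X,B)$, induced by the pair inclusion $(A,B)\hookrightarrow(X,B)$;
\item[(ii)] $k_*\colon \pi_n^{qtop}(X,B)\longrightarrow\pi_n^{qtop}(X,A)$, induced by the pair inclusion $(X,B)\hookrightarrow(X,A)$;
\item[(iii)] $\partial\colon \pi_n^{qtop}(X,A)\longrightarrow\pi_{n-1}^{qtop}(A,B)$, defined as the composition of the qTop boundary $\pi_n^{qtop}(X,A)\to\pi_{n-1}^{qtop}(A)$ from the LES of $(X,A)$ with the map $\pi_{n-1}^{qtop}(A)\to\pi_{n-1}^{qtop}(A,B)$ from the LES of $(A,B)$.
\end{enumerate}
Continuity is then immediate: the maps in (i) and (ii) are induced on the quotients $\pi_n^{qtop}(-,-)$ by continuous maps of the underlying loop-pair spaces, and the map in (iii) is a composition of two morphisms already proven continuous by Brazas.

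After that I would check exactness at each of the three positions by the standard triple-sequence argument, chasing elements through the commutative braid: exactness at $\pi_n^{qtop}(A,B)$ and at $\pi_{n-1}^{qtop}(A,B)$ each reduces, using the LES of $(A,B)$, to the corresponding exactness in the pair sequences of $(X,A)$ and $(X,B)$. The main obstacle will be exactness at $\pi_n^{qtop}(X,B)$, where an element of $\ker k_*$ must first be traced back through the LES of $(X,A)$ to produce a class in $\pi_n^{qtop}(A)$, then pushed into $\pi_n^{qtop}(A,B)$ via the LES of $(A,B)$, and finally checked to map back to the original class under $j_*$; this is a three-step chase that uses each of the three pair sequences in turn and must be carried out carefully to confirm that the ambiguities introduced by the boundary lifts cancel.

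Once exactness is verified as groups (respectively as pointed sets in the final stages), no further topological work is required, because the continuous homomorphisms constructed above already live in qTop and we have not chosen any set-theoretic inverses. Thus the resulting sequence is a long exact sequence in the category of quasitopological groups, exactly as asserted.
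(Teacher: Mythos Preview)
Your proposal is correct and follows essentially the same route as the paper: the paper's proof consists solely of displaying the commutative (braid) diagram built from the three pair sequences of $(X,A)$, $(X,B)$, and $(A,B)$ and instructing the reader to chase it, which is precisely the argument you outline in more detail. Your added remarks on why the triple maps are continuous in qTop make explicit what the paper leaves implicit.
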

\begin{proof}
Consider the following commutative diagram and chase a long diagram as follows:
\begin{center}
\includegraphics[height=5cm,width=11cm]{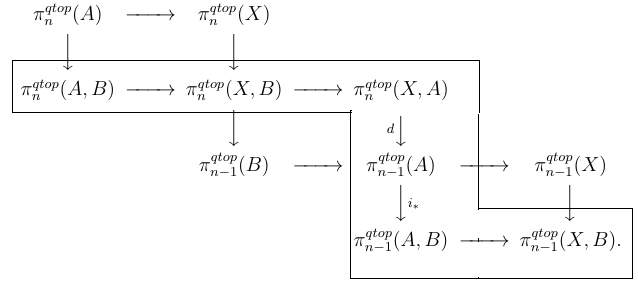}
\end{center}
\end{proof}

The following results are immediate consequences of Sequence (\ref{seq}).
\begin{corollary}\label{11}
If $p:E\longrightarrow B$ is a fibration with $E$ contractible, then \linebreak $f_*: \pi_{n}^{qtop}(B,b_0)\longrightarrow  \pi_{n-1}^{qtop}(F,\tilde{b_0})$ is an isomorphism in quasitopological groups for all $n\geq 2$ and $f_*: \pi_{1}^{qtop}(B,b_0)\longrightarrow  \pi_{0}^{qtop}(F)$ is an isomorphism in Set.
\end{corollary}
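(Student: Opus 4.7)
The plan is to read off the statement directly from the long exact sequence (\ref{seq}) applied to the fibration $p:E\longrightarrow B$, using contractibility of $E$ to collapse adjacent terms to the trivial object. Since $E$ is contractible, $\pi_m(E,\tilde{b_0})$ is the trivial group for every $m\geq 1$ and $\pi_0^{qtop}(E)$ is a one-point set; the trivial group carries only one topology, so in particular $\pi_m^{qtop}(E)=0$ as a quasitopological group for all $m\geq 1$.

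For $n\geq 2$, I would isolate the relevant segment of Sequence (\ref{seq}), namely
\[
\pi_{n}^{qtop}(E)\longrightarrow\pi_{n}^{qtop}(B)\overset{f_*}{\longrightarrow}\pi_{n-1}^{qtop}(F)\longrightarrow\pi_{n-1}^{qtop}(E),
\]
which by the previous paragraph reduces to $0\to\pi_{n}^{qtop}(B)\overset{f_*}{\to}\pi_{n-1}^{qtop}(F)\to 0$. By exactness in qTop (as asserted by Brazas, \cite[Proposition 2.20]{Br3}), $f_*$ is a bijective homomorphism that is continuous and open onto its image, that is, an isomorphism of quasitopological groups. This is precisely the argument pattern used in the proof of Proposition~3.3 above, so I would mirror that style: continuity of $f_*$ is already provided by the preceding theorem, and openness (equivalently, continuity of the inverse) is exactly what exactness in the category qTop supplies.

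For $n=1$, I would apply the same argument to the tail
\[
\pi_{1}^{qtop}(E)\longrightarrow\pi_{1}^{qtop}(B)\overset{f_*}{\longrightarrow}\pi_{0}^{qtop}(F)\longrightarrow\pi_{0}^{qtop}(E),
\]
noting that $\pi_{0}^{qtop}(E)$ is a singleton since $E$ is contractible (hence path connected). Exactness at the two outer terms forces $f_*$ to be injective and surjective; since $\pi_0^{qtop}(F)$ is only a pointed set (not a group), this yields an isomorphism in the category Set.

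The only potentially delicate point is distinguishing an isomorphism of quasitopological groups from a bijective continuous homomorphism, but this is handled automatically by the fact that Sequence (\ref{seq}) is exact in qTop (not merely in Grp), so I do not expect any genuine obstacle beyond citing \cite[Proposition 2.20]{Br3} and invoking the previous theorem for the continuity of $f_*$.
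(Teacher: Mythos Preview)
Your proposal is correct and coincides with the paper's approach: the paper gives no explicit proof at all, merely declaring the corollary an ``immediate consequence of Sequence~(\ref{seq})'', which is precisely the long-exact-sequence argument you spell out.
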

\begin{corollary}
Let $(X,x)$ be a pointed topological space. Then \linebreak $\pi_n^{qtop}(X,x)\cong \pi_{n-1}^{qtop}(\Omega(X,x),e_x)$ in quasitopological groups for all $n\geq 2$,  where  $e_x$ is the constant loop in $X$ at $x$ and $\pi_1^{qtop}(X,x)\cong \pi_{0}^{qtop}(\Omega(X,x))$  in Set.
\end{corollary}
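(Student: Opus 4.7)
The natural strategy is to exhibit a fibration with contractible total space whose base is $X$ and whose fiber is $\Omega(X,x)$, and then quote Corollary \ref{11}. The canonical candidate is the based path-space fibration
\[
p:P(X,x)\lo X,\qquad p(\gamma)=\gamma(1),
\]
where $P(X,x)=\{\gamma\in X^I:\gamma(0)=x\}$ carries the compact-open topology. One checks in the standard way (using the exponential correspondence for compactly generated situations, or directly via the homotopy lifting property against arbitrary spaces, which works for any $X$) that $p$ is a Hurewicz fibration. The fiber $p^{-1}(x)$ is exactly $\Omega(X,x)$, and $P(X,x)$ is contractible by the straight-line deformation $H:P(X,x)\times I\lo P(X,x)$, $H(\gamma,t)(s)=\gamma((1-t)s)$, which retracts every path onto the constant path $e_x$.

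With the fibration in hand, I would invoke Corollary \ref{11} directly. Since $E=P(X,x)$ is contractible, that corollary furnishes, for each $n\geq 2$, an isomorphism of quasitopological groups
\[
f_*:\pi_n^{qtop}(X,x)\st{\cong}{\lo}\pi_{n-1}^{qtop}(\Omega(X,x),e_x),
\]
and, for $n=1$, a bijection $f_*:\pi_1^{qtop}(X,x)\lo\pi_0^{qtop}(\Omega(X,x))$ in the category of sets (there being no group structure on $\pi_0$ in general). This immediately yields both conclusions of the corollary.

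There is essentially no obstacle: the only nontrivial ingredients are the fact that the path-space projection is a fibration and that $P(X,x)$ is contractible, both of which are classical. The argument is organised entirely as a specialisation of Corollary \ref{11} to the path-loop fibration, so the write-up can be kept to a couple of lines, emphasising the choice of fibration and the contractibility of $P(X,x)$, and then citing the preceding corollary.
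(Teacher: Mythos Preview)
Your proposal is correct and matches the paper's own argument: the paper also applies Corollary \ref{11} to the path-space fibration $p:PX\to X$ with $PX=(X,x)^{(I,0)}$ contractible and fiber $\Omega(X,x)$, citing Switzer for the fibration and contractibility facts and writing out the resulting long exact sequence explicitly. The approaches are essentially identical.
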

\begin{proof}
By \cite[Proposition 4.3]{Sw}, the map $p:PX\longrightarrow X$ is a fibration with fiber $\Omega(X,x)$, where $PX= (X,x)^{(I,0)}$. By \cite[Proposition 2.20]{Br3}, the sequence
\begin{align*}
\cdots &\longrightarrow\pi_n^{qtop}(PX, e_x)\longrightarrow\pi_n^{qtop}(X,x)\longrightarrow\pi_{n-1}^{qtop}(\Omega (X,x), e_x)\longrightarrow\pi_{n-1}^{qtop}(PX, e_x)\longrightarrow \cdots\\
&\cdots\longrightarrow\pi_1^{qtop}(X,x)\longrightarrow\pi_0^{qtop}(\Omega (X,x))\longrightarrow\pi_0^{qtop}(PX)\longrightarrow\pi_0^{qtop}(X)
\end{align*}
is exact in qTop. By \cite[Proposition 4.4]{Sw}, $(PX, e_x)$ is contractible and therefore the result holds by Corollary \ref{11}.
\end{proof}
\begin{corollary}
If $p:\tilde{X}\longrightarrow X$ is a covering projection, then for all $n\geq 2$, $\pi_n^{qtop}(\tilde{X})\cong \pi_n^{qtop}(X)$ in quasitopological groups and $\pi_1^{qtop}(\tilde{X})$ can be embedded in $\pi_1^{qtop}(X)$.
\end{corollary}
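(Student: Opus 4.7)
The plan is to apply Brazas's long exact sequence (\ref{seq}) directly to the covering projection $p$, which is a Hurewicz fibration with fiber $F := p^{-1}(x_0)$, a discrete space. The key preliminary observation is that $\pi_n^{qtop}(F)$ is the trivial quasitopological group for every $n\geq 1$: since $I^n$ is connected and $F$ is discrete, every based map $(I^n,\partial I^n)\longrightarrow(F,f_0)$ is constant, so $\Omega^n(F,f_0)$ is a one-point space and its quotient $\pi_n^{qtop}(F,f_0)$ is the trivial quasitopological group (not merely the trivial abstract group).

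For $n\geq 2$, the relevant segment of (\ref{seq}) reads
\[
\pi_n^{qtop}(F)\longrightarrow \pi_n^{qtop}(\tilde X)\overset{p_*}{\longrightarrow}\pi_n^{qtop}(X)\longrightarrow \pi_{n-1}^{qtop}(F),
\]
and both outer terms vanish (since $n-1\geq 1$ as well), so exactness in qTop forces $p_*$ to be an isomorphism of quasitopological groups. For $n=1$, the segment
\[
\pi_1^{qtop}(F)\longrightarrow \pi_1^{qtop}(\tilde X)\overset{p_*}{\longrightarrow}\pi_1^{qtop}(X)
\]
still has $\pi_1^{qtop}(F)=0$, so exactness gives $\ker p_*=0$, yielding the desired embedding of $\pi_1^{qtop}(\tilde X)$ into $\pi_1^{qtop}(X)$ as a continuous injective homomorphism of quasitopological groups; algebraically this recovers the classical injectivity of $p_*$ for covering maps.

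The main point requiring care is the interpretation of ``embedded'': exactness in (\ref{seq}) provides a continuous injection in qTop, which is what the statement appears to claim, but promoting this to a genuine topological embedding (i.e.\ a homeomorphism onto its image) would need additional input comparing the quotient topology on $\pi_1^{qtop}(\tilde X)$ with the subspace topology it inherits through $p_*$. Since the corollary is phrased simply as ``can be embedded'' and every other ingredient is formal — discrete fibers kill higher homotopy, and Brazas's exact sequence does the rest — I do not anticipate any substantive obstacle beyond clarifying this interpretive point.
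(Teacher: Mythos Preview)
Your proposal is correct and follows exactly the paper's approach: apply the long exact fibration sequence in qTop to the covering map, using that the discrete fiber $F$ has trivial $\pi_n^{qtop}(F)$ for all $n\geq 1$. Your caveat about whether ``embedded'' means a genuine topological embedding or merely a continuous injective homomorphism is well-taken---the paper's own one-line proof is no more explicit on this point than your argument.
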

\begin{proof}
This result follows by Sequence (\ref{seq}) and this fact that  the fiber $F$ of the covering projection $p$ is discrete and therefore $\pi_n^{qtop}(F)$ is trivial, for all $n\geq 1$.
\end{proof}

\end{document}